\newtheorem{theorem}{Theorem}
\newtheorem{lemma}[theorem]{Lemma}
\newtheorem{corollary}[theorem]{Corollary}
\newtheorem{question}{Question}
\newtheorem{property}[question]{Property}
\newtheorem*{remark}{Remark}
\DeclareMathOperator{\ord}{ord}
\begin{document}
\title{Around rational functions invertible in radicals}
\author{Y. Burda}

\begin{abstract}
A class of rational functions characterized by some wonderful properties is studied. The properties that identify this class include simple algebra (their inverses can be expressed in radicals), simple topology (the total space of the minimal Galois covering dominating them has genus 0 or 1) and simple local topology (branching data). Explicit formulae for these functions are obtained as well as their classification up to different equivalence relations.
\end{abstract}

\maketitle

\section{Introduction}

The central subject of this paper is a family of rational functions having some wonderful properties. These functions appeared, for instance, in the work \cite{Ritt} of J.F. Ritt, where the following question has been asked:

\begin{question} \label{inverse-radicals-question}
What are the rational functions $R:\mathbb{CP}^1\to\mathbb{CP}^1$ with inverses expressible in radicals (i.e. such that the equation $w=R(z)$ can be solved for $z$ in radicals)?
\end{question}

Ritt has identified these functions in two cases: when the degree of the rational function was prime and when the function was a polynomial. The results of Ritt were largely forgotten for some time and were partially rediscovered by A.G. Khovanskii in \cite{Khovanskii}.

The functions that appeared as an answer to Ritt's question \ref{inverse-radicals-question} in the case the degree was prime turn out to have a simple  uniform description --- they all have the following property:

\begin{property}\label{main-question}
The rational functions $R$ fit into a commutative diagram
$$\xymatrix{
A_1 \ar[r] \ar[d] & A_2 \ar[d]\\
S \ar[r]^R & S
}
$$
where the arrow between $A_1$ and $A_2$ is an isogeny of algebraic groups of dimension $1$, which are either $\mathbb{C}^*$ or elliptic curves, and the vertical arrows are quotients by the action of a finite group of automorphisms of the algebraic group $A_i$ ($S$ in this case is isomorphic either to $\mathbb{C}^*$, if $A_i$ is $\mathbb{C}^*$, or to $\mathbb{CP}^1$, if $A_i$ is an elliptic curve). 
\end{property}

Instead of the algebraic characterization above one can characterize the same family of functions (with some small additions) by the following topological property:

\begin{property} \label{total-space-property}
The total space of the minimal Galois branched covering that dominates the branched covering $R:\mathbb{CP}^1\to\mathbb{CP}^1$is non-hyperbolic (i.e. has genus 0 or 1).
\end{property}

This topological description can be formulated in local, rather than global terms as well. Namely one finds that the branching data of the rational functions that have property \ref{total-space-property} is rather simple (e.g. there are at most 4 branching points). One can in fact extract a characterization of these functions by their branching data. This is done explicitly in section \ref{branching-section}. Here we can instead characterize these functions implicitly as the answer to the following question:

\begin{question} \label{bounded-genus-question}
Let $w_1,\ldots,w_b\in \mathbb{CP}^1$ be a given collection of points and let $d_1,\ldots,d_b\in\mathbb{N}\cup \{0\}$ be some given natural numbers. Consider the collection of holomorphic functions $R$ from some compact Riemann surface to $\mathbb{CP}^1$ that are branched over the points $w_1,\ldots,w_b$ with the local monodromy $\sigma_i$ around the point $w_i$ satisfying $\sigma_i^{b_i}=1$. Suppose that the functions in this collection have bounded topological complexity in the following sense: the genus of their source Riemann surface is bounded. What are the rational functions in this collection?
\end{question}

Question \ref{bounded-genus-question} was of the form "when simple local topology implies a simple global topology for the mapping?". This question has an algebraic analogue of the form "when simple branching data implies simple algebraic properties for the mapping?":

\begin{question} \label{simple-algebra-question}
Let $w_1,\ldots,w_b\in \mathbb{CP}^1$ be a given collection of points and let $d_1,\ldots,d_b\in\mathbb{N}\cup \{0\}$ be some given natural numbers. Consider the collection of holomorphic functions $R$ from some compact Riemann surface to $\mathbb{CP}^1$ that are branched over the points $w_1,\ldots,w_b$ with the local monodromy $\sigma_i$ around the point $w_i$ satisfying $\sigma_i^{b_i}=1$. Suppose that the functions in this collection have inverses expressible in radicals. What are the rational functions in this collection?
\end{question}

The answer to this question is the same as the answer to the previous one, with the exception of icosahedral rational functions (case (2,3,5) below).

This paper is mainly concerned with the questions/properties outlined above. However the functions we will be discussing appear in arithmetic applications as well. Namely these functions appear as a part of the answer to the following question:

\begin{question}
What are the rational functions $R$ defined over a number field $K$ so that for infinitely many places the map induced by $R$ on the projective line defined over the corresponding residue field is a bijection?
\end{question}

This question (inspired by conjecture of Schur) has been answered in a works of M. Fried \cite{Fried} and R.Gularnick, P. Muller and J. Saxl \cite{GMS}  

I hope to devote a separate paper to the multidimensional generalization of these rational functions, stemming mainly from property \ref{main-question}. These analogues have been explored in the case where the functions are polynomials in works of Hoffmann and Withers \cite{HoffmannWithers} and Veselov \cite{Veselov}

The paper is structured as follows:

In section \ref{galois-coverings-section} we describe all Galois branched coverings over the Riemann sphere with total space of the covering having genus 0 or 1. In section \ref{branching-section} we give a necessary and sufficient condition on the local branching of a branched covering over the Riemann sphere to be dominated by a Galois covering from section \ref{galois-coverings-section}. Questions \ref{bounded-genus-question} and \ref{simple-algebra-question} is discussed in the end of section \ref{branching-implies-section}. In section \ref{ritt-problem-section} we apply the results of the previous section to prove Ritt's theorem more naturally than in \cite{Ritt}. In section \ref{classification-section} we improve Ritt's result to give three classifications of rational functions with inverses expressible in radicals: up to left-analytical equivalence, up to left-right analytical equivalence and up to orientation-preserving topological equivalence. In section \ref{formulae_section} we give explicit formulae for Ritt's rational functions. In section \ref{appendix-section} we formulate some standard group-theoretic results we needed in the paper. Finally in section \ref{funny-section} we give some funny small applications of results mentioned in this paper.

The author thanks his advisor A.G. Khovanskii for suggesting to look at Ritt's forgotten works, for many interesting and helpful discussions and for encouragement to think about the ideas discussed in this paper.

\section{Conventions} \label{conventions_section}

A branched covering over a compact Riemann surface $S$ is a non-constant holomorphic map $f:\tilde{S}\to S$ from a compact Riemann surface $\tilde{S}$ to $S$. For any point $z_0\in \tilde{S}$ there exist holomorphic coordinate charts around $z_0$ and $f(z_0)$ so that in these coordinates $f(z)=z^{d_{z_0}}$. The integer $d_{z_0}$ is called the multiplicity of $f$ at $z_0$.

The local monodromy at a point $w\in S$ is the conjugacy class inside the monodromy group of the permutation induced by going along a small loop in the base around $w$.

 A branched covering $f:\tilde{S}\to S$ is called Galois if there exists a group $G$ of automorphisms of $\tilde{S}$ that are compatible with $f$ (i.e. $f\circ g=f$ for any $g\in G$) that acts transitively on all fibers of $f$. For Galois branched coverings the multiplicities of all points in a fiber over a fixed point $w$ in $S$ are the same. This common multiplicity will be called the multiplicity of the Galois branched covering $f$ at $w$.

\section{Non-hyperbolic Galois branched coverings over the Riemann sphere} \label{galois-coverings-section}

In this section we will describe the Galois branched coverings over the Riemann sphere, whose total space has genus 0 or 1, i.e. is topologically a sphere or a torus. Equivalently it is a description of all non-free actions of a finite group on a sphere or a torus.

Let $f:T\to \mathbb{CP}^1$ be a Galois branched covering of degree $d$. Let $w_1,\ldots,w_b\in \mathbb{CP}^1$ be all the branching points of $f$ and let $d_{w_i}$ denote the local multiplicity at $w_i$.

Riemann-Hurwitz formula for $f$ states that $$\chi(T)=\chi(\mathbb{CP}^1)d-\sum_{i=1}^b{(d-\frac{d}{d_i})}$$ or $$b-2+\frac{\chi(T)}{d}=\frac{1}{d_1}+\ldots+\frac{1}{d_b}$$

If the surface $T$ is a sphere then $\chi(T)=2$ and the formula specializes to $$b-2+\frac{2}{d}=\frac{1}{d_1}+\ldots+\frac{1}{d_b}$$

If there are only two branching points, the local monodromies around them must be the same, since the product of small loops around the two branching points is contractible in the compliment to the branching locus. In this case we have then a family of solutions $$b=2;d_1=d_2=d$$

In the case $b=3$ we have the following solutions:

case (2,2,*): $$b=3,d_1=2,d_2=2,d_3=d$$

case (2,3,3): $$b=3,d_1=2,d_2=3,d_3=3;d=12$$

case (2,3,4): $$b=3,d_1=2,d_2=3,d_3=4;d=24$$

case (2,3,5): $$b=3,d_1=2,d_2=3,d_3=5;d=60$$

For $b\ge 4$ there are no solutions.

If the surface $T$ is the torus, then $\chi(T)=0$ and the formula specializes to $$b-2=\frac{1}{d_1}+\ldots+\frac{1}{d_b}$$

The solutions of this equation are

case (2,3,6): $$b=3,d_1=2,d_2=3,d_3=6$$

case (2,4,4): $$b=3,d_1=2,d_2=4,d_3=4$$

case (3,3,3): $$b=3,d_1=3,d_2=3,d_3=3$$

case (2,2,2,2): $$b=4,d_1=2,d_2=2,d_3=2,d_4=2$$

We see that there are only few possibilities for such Galois coverings and they all have very simple branching data. In section \ref{branching-section} we will see that this simple local picture characterizes these coverings uniquely.

\section{Auxiliary definitions and results} \label{auxiliary-section}

To prove the results of section \ref{branching-section} we will need the following two lemmas.

\begin{lemma} \label{two-branchings-lemma}
If a branched covering over $\mathbb{CP}^1$ is branched at most at two points, then the total space of the covering is the Riemann sphere. Moreover, if the degree of the covering is $d$, then a holomorphic coordinate on source and target spheres can be chosen so that the covering is given by $z\to z^d$.

If a covering over a genus one curve is unbranched, then the total space of the covering has genus one.
\end{lemma}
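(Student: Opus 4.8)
The plan is to prove the two statements separately, using the Riemann–Hurwitz formula as the main computational engine together with basic facts about branched coverings and their monodromy.

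For the first statement, let $f:\tilde{S}\to\mathbb{CP}^1$ be a branched covering of degree $d$ branched over at most two points. I would first handle the genus computation via Riemann–Hurwitz. Writing $\chi(\tilde{S})=2d-\sum_{i}(d-\frac{d}{d_{i}})$, the key observation is that with at most two branch points the total ramification is as large as possible relative to the number of branch points. In the zero-branch-point case $f$ is an unbranched cover of $\mathbb{CP}^1$, hence trivial since $\mathbb{CP}^1$ is simply connected, forcing $d=1$ and $\tilde{S}=\mathbb{CP}^1$. In the one-branch-point case, the single local monodromy generates the (transitive) monodromy group, but the product of all local monodromies around all branch points must be trivial (a loop enclosing all branch points is contractible in $\mathbb{CP}^1$ minus the branch locus, since $\mathbb{CP}^1$ is a sphere); with only one branch point this forces that monodromy to be trivial, again giving $d=1$. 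So the genuinely interesting case is two branch points. Here the crucial point, already noted in the discussion of the two-branch-point family in Section~\ref{galois-coverings-section}, is that the two local monodromies $\sigma_{1},\sigma_{2}$ must satisfy $\sigma_{1}\sigma_{2}=1$, so $\sigma_{2}=\sigma_{1}^{-1}$; since the monodromy group is generated by $\sigma_{1}$ and acts transitively on $d$ points, $\sigma_{1}$ must be a single $d$-cycle. A single cyclic monodromy means the cover is connected with exactly one preimage of each branch point, each of full multiplicity $d$, so Riemann–Hurwitz gives $\chi(\tilde{S})=2d-2(d-1)=2$, i.e.\ genus $0$ and $\tilde{S}=\mathbb{CP}^1$.

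To upgrade this to the explicit normal form $z\mapsto z^{d}$, I would place the two branch points at $0$ and $\infty$ on the target and their unique preimages at $0$ and $\infty$ on the source $\tilde{S}\cong\mathbb{CP}^1$. After these normalizations $f$ is a degree-$d$ rational map $\mathbb{CP}^1\to\mathbb{CP}^1$ unramified away from $\{0,\infty\}$ with a single zero of order $d$ at $0$ and a single pole of order $d$ at $\infty$; hence $f(z)=cz^{d}$ for some constant $c\neq 0$, and rescaling the source coordinate absorbs $c$, yielding exactly $z\mapsto z^{d}$.

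For the second statement, let $g:\tilde{S}\to E$ be an unbranched covering of a genus-one curve $E$. Now Riemann–Hurwitz reads $\chi(\tilde{S})=\chi(E)\cdot d-0=0\cdot d=0$, since $\chi(E)=0$ and there is no ramification contribution; hence $\chi(\tilde{S})=0$, which means $\tilde{S}$ has genus one. I expect \emph{the two-branch-point analysis to be the main obstacle}, since it is the only place requiring a genuine argument rather than a direct Euler-characteristic computation: one must combine the relation $\sigma_{1}\sigma_{2}=1$ with transitivity to conclude that $\sigma_{1}$ is a full $d$-cycle, and then argue that a connected cover with cyclic total monodromy is forced into the stated normal form. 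The two remaining cases (zero or one branch point) and the genus-one statement each reduce to a one-line Euler-characteristic or simple-connectivity argument.
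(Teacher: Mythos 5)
Your proposal is correct, but a direct comparison is impossible for an unusual reason: the paper never proves this lemma. It is stated in the auxiliary section as a known fact with no argument attached, so your write-up supplies a proof the author omitted. That said, your route is exactly the one the paper's surrounding material suggests: the key observation that the local monodromies around the two branch points are inverse to each other (because the product of small loops around all branch points is contractible in the complement of the branch locus) appears verbatim in the section on non-hyperbolic Galois coverings, and Riemann--Hurwitz is the computational engine used throughout the paper. All your steps are sound: zero or one branch point forces $d=1$ by simple connectivity of $\mathbb{CP}^1$ and of $\mathbb{C}$; with two branch points, transitivity of the cyclic monodromy group $\langle\sigma_1\rangle$ forces $\sigma_1$ to be a single $d$-cycle, so each branch point has exactly one, totally ramified, preimage and $\chi(\tilde{S})=2d-2(d-1)=2$; and a degree-$d$ rational map whose only zero has order $d$ at $0$ and whose only pole has order $d$ at $\infty$ is $cz^{d}$, with $c$ absorbed by rescaling the source coordinate. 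The statement about unbranched covers of a genus-one curve is immediate from Riemann--Hurwitz, as you say. One streamlining worth noting: once you know the restricted unbranched cover over $\mathbb{CP}^1\setminus\{0,\infty\}\cong\mathbb{C}^*$ has cyclic monodromy generated by a $d$-cycle, the uniqueness of the connected degree-$d$ covering of $\mathbb{C}^*$ (index-$d$ subgroups of $\pi_1(\mathbb{C}^*)\cong\mathbb{Z}$ being unique) already identifies it with $z\mapsto z^{d}$ on $\mathbb{C}^*$, and filling in the two punctures yields the genus statement and the normal form in one stroke, making the Riemann--Hurwitz computation unnecessary; your version is equally valid, just slightly longer.
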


\begin{lemma} \label{local-branching-lemma}
\emph{(Local picture of pull-back)}
Let $D$ denote the unit disc in $\mathbb{C}$. Let $f:D\to D$ denote the branched covering $z\to z^n$ and $g:D\to D$ denote the branched covering $z\to z^m$. Let $\tilde{f},\tilde{g},X$ fit into the pullback diagram
$$\begin{CD}
X         @>\tilde{f}>> D\\
@VV\tilde{g}V           @VVgV\\
D         @>f>>          D
\end{CD}$$

Then $X$ is a disjoint union of $\gcd(m,n)$ discs, $\tilde{f}$ restricted to each of the discs is of the form $z\to z^{\frac{n}{\gcd(m,n)}}$, $\tilde{g}$ restricted to each of the discs is of the form $z\to z^{\frac{m}{\gcd(m,n)}}$.
\end{lemma}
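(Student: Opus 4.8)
\emph{Plan.} The plan is to realize the pullback concretely and then pass to its normalization. Writing $u$ for the coordinate on the bottom--left disc and $v$ for the coordinate on the top--right disc, the set-theoretic fiber product is
\[
\widehat X=\{(u,v)\in D\times D:\ u^n=v^m\},
\]
with $\tilde g(u,v)=u$ and $\tilde f(u,v)=v$, so that $f\circ\tilde g=g\circ\tilde f$ as required by the diagram. The first point I would emphasize is a subtlety: $\widehat X$ is \emph{singular} at the origin whenever $n,m\ge 2$, since all of its local branches pass through $(0,0)$; it is therefore not a Riemann surface, and the object $X$ in the statement — the pullback taken in the category of branched coverings — is the \emph{normalization} of $\widehat X$. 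The content of the lemma is thus that this normalization is a disjoint union of $\gcd(m,n)$ discs carrying the asserted monomial maps.

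For the main computation I would set $\ell=\gcd(m,n)$ and write $m=\ell m'$, $n=\ell n'$ with $\gcd(m',n')=1$. Then
\[
u^n-v^m=(u^{n'})^{\ell}-(v^{m'})^{\ell}=\prod_{\zeta^{\ell}=1}\bigl(u^{n'}-\zeta\,v^{m'}\bigr),
\]
and since $\gcd(m',n')=1$ each factor $u^{n'}-\zeta v^{m'}$ is irreducible and cuts out a single branch of $\widehat X$; a short check shows two distinct branches meet only at the origin. Each branch is rational and is normalized by an explicit monomial parametrization: for $\zeta=1$ one takes $t\mapsto (t^{m'},t^{n'})$, which is injective on $D$ — here $\gcd(m',n')=1$ is exactly what forces injectivity — and hence is the normalization map, and for general $\zeta$ one rescales $u$ by a fixed $n'$-th root of $\zeta$. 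Reading off the two projections along such a parametrization yields $\tilde f:t\mapsto v=t^{n'}=t^{\,n/\gcd(m,n)}$ and $\tilde g:t\mapsto u=(\text{unit})\,t^{m'}=t^{\,m/\gcd(m,n)}$ after rescaling the disc coordinate, which is precisely the claim.

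As an independent check of the combinatorics I would run a covering-space argument over the punctured disc: away from the origin $f$ and $g$ are honest coverings of $D^{*}$, so $\pi_1(D^{*})=\mathbb Z$ sees them as the subgroups $n\mathbb Z$ and $m\mathbb Z$. The components of the pullback correspond to the cosets of $n\mathbb Z+m\mathbb Z=\gcd(m,n)\mathbb Z$, giving exactly $\gcd(m,n)$ of them, and each component corresponds to $n\mathbb Z\cap m\mathbb Z=\operatorname{lcm}(m,n)\mathbb Z$, so the two projections restricted to it have degrees $\operatorname{lcm}(m,n)/m=n/\gcd(m,n)$ and $\operatorname{lcm}(m,n)/n=m/\gcd(m,n)$. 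Lemma \ref{two-branchings-lemma} then fills in the punctures, turning each component into a disc on which the projections are the stated monomials. The main obstacle is conceptual rather than computational: one must be careful that the relevant $X$ is the normalization of the naive fiber product rather than the connected, singular fiber product itself, and correspondingly that normalization separates the branches into genuinely \emph{disjoint} discs. Once this is pinned down, the remaining arithmetic with $\gcd$ and $\operatorname{lcm}$ is routine.
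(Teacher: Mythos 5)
Your proposal is correct, but there is nothing in the paper to compare it against: the paper states Lemma \ref{local-branching-lemma} in the auxiliary section without proof, treating it as a standard local fact. Your write-up supplies an honest justification, and in fact two independent ones. The algebraic route is the right way to make the statement precise: the set-theoretic fiber product $\{u^n=v^m\}$ is singular at the origin for $m,n\ge 2$, the pullback in the category of Riemann surfaces is its normalization, and the factorization $u^n-v^m=\prod_{\zeta^{\ell}=1}(u^{n'}-\zeta v^{m'})$ together with the monomial parametrizations $t\mapsto(\alpha t^{m'},t^{n'})$ exhibits the normalization as $\gcd(m,n)$ disjoint discs with projections of the asserted degrees. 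The covering-space check over the punctured disc is also correct and is the more portable argument: components correspond to cosets of $n\mathbb{Z}+m\mathbb{Z}=\gcd(m,n)\mathbb{Z}$ in $\pi_1(D^*)=\mathbb{Z}$, and the degrees of the two projections on each component come from $n\mathbb{Z}\cap m\mathbb{Z}=\operatorname{lcm}(m,n)\mathbb{Z}$; this is essentially how the lemma gets used in the proof of Theorem \ref{classification}, where only the component count and the ramification of the restrictions matter.

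Two small points to tighten. First, citing Lemma \ref{two-branchings-lemma} to fill in the punctures is not quite right: that lemma concerns compact surfaces (coverings of $\mathbb{CP}^1$ branched at two points, and unbranched coverings of genus-one curves). What you actually need is the standard fact that a connected degree-$k$ covering of $D^*$ is isomorphic to $t\mapsto t^k$ and extends over the puncture to the branched covering $t\mapsto t^k$ of discs; this is the local normal form already invoked in the paper's conventions section. Second, the phrase ``after rescaling the disc coordinate'' should be stated more carefully: on a branch $u^{n'}=\zeta v^{m'}$ with $\zeta\neq 1$ one \emph{cannot} choose a single coordinate $t$ in which both projections are simultaneously exact monomials while keeping the given coordinates on the two target discs (if both were monomial the branch would satisfy $u^{n'}=v^{m'}$); already for $n=m=2$ the branch $u=-v$ has projections $t$ and $-t$. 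The correct, and clearly intended, reading of the lemma --- and what your argument genuinely establishes --- is that each of $\tilde f$, $\tilde g$ restricted to a component is separately equivalent to the corresponding monomial map, i.e.\ a totally ramified branched covering of discs of degree $n/\gcd(m,n)$, respectively $m/\gcd(m,n)$, which is all that the paper's applications require.
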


For purposes of section \ref{branching-section} we will also need the Galois coverings $T\to T/G$, where the Riemann surface $T$ and the group $G$ acting on it are as following:

Case (2,2,*): $T=\mathbb{CP}^1$, $G=\mathbb{Z}_2$, $G$ acts by an involution fixing two points on $\mathbb{CP}^1$.

Case (2,3,3): $T=\mathbb{CP}^1$, $G=A_4$ acting on the sphere by rotations of a regular tetrahedron

Case (2,3,4): $T=\mathbb{CP}^1$, $G=S_4$ acting on the sphere by rotations of an octahedron

Case (2,3,5): $T=\mathbb{CP}^1$, $G=A_5$ acting on the sphere by rotations of an icosahedron

Case (2,4,4): $T=\mathbb{C}/\langle1,i\rangle$, $G=\mathbb{Z}_4$ with generator acting by $z\to i z$

Case (2,3,6): $T=\mathbb{C}/\langle1,\omega\rangle$, $G=\mathbb{Z}_6$ with generator acting by $z\to -\omega z$, $\omega^2+\omega+1=0$.

Case (3,3,3): $T=\mathbb{C}/\langle 1,\omega\rangle$, $G=\mathbb{Z}_3$ with generator acting by $z\to \omega z$, $\omega^2+\omega+1=0$.

Case (2,2,2,2): $T$ is any elliptic curve, $G=\mathbb{Z}_2$, with the nontrivial element acting by $z\to -z$.

In each case the quotient space $T/G$ can be identified with $\mathbb{CP}^1$. For any of the non-trivial subgroups $H$ of $G$ the quotient space by the action of $H$ can also be identified with $\mathbb{CP}^1$.

\section{Characterization by local branching} \label{branching-section}

In this section we will prove the following result:

\begin{theorem}\label{classification}
Let $f:S\to \mathbb{CP}^1$ be a branched covering from a Riemann surface $S$ to the Riemann sphere and suppose it is branched at $b$ points $w_1,\ldots,w_b$ with local monodromies $\sigma_1,\ldots,\sigma_b$ around them satisfying $\sigma_i^{d_i}=1$ for one of the following cases:

case (*,*): $b=2,d_1=d_2=0$ (i.e. we are given that it is branched at two points)

case (2,2,*): $b=2,d_1=d_2=2,d_3=0$ (i.e. we are given that it is branched at most at three points and local monodromies around two of them square to identity)
\begin{itemize}
\item case (2,3,3): $b=3,d_1=2,d_2=3,d_3=3$

\item case (2,3,4): $b=3,d_1=2,d_2=3,d_3=4$

\item case (2,3,5): $b=3,d_1=2,d_2=3,d_3=5$

\item case (2,3,6): $b=3,d_1=2,d_2=3,d_3=6$

\item case (2,4,4): $b=3,d_1=2,d_2=4,d_3=4$

\item case (3,3,3): $b=3,d_1=3,d_2=3,d_3=3$

\item case (2,2,2,2): $b=4,d_1=2,d_2=2,d_3=2,d_4=2$
\end{itemize}
Then the function $f$ fits into one of the following diagrams:

case (*,*): 
$$\xymatrix{
\mathbb{CP}^1 \ar[r]^{z\to z^d} \ar[d]^{\simeq} & \mathbb{CP}^1 \ar[d]^{\simeq}\\
S \ar[r]^f & \mathbb{CP}^1
}
$$

case (2,2,*):
$$\xymatrix{
\mathbb{CP}^1 \ar[r]^{z\to z^d} \ar[d] & \mathbb{CP}^1 \ar[d]\\
\mathbb{CP}^1/H  \ar[d]^{\simeq} \ar[r] & \mathbb{CP}^1/G \ar[d]^{\simeq} \\
S \ar[r]^{f} & \mathbb{CP}^1
}
$$
where $G=\mathbb{Z}_2$ and $H$ is one of its subgroups - either $\mathbb{Z}_2$ or the trivial group.

cases (2,3,3),(2,3,4),(2,3,5):
$$\xymatrix{
\mathbb{CP}^1 \ar[d] \ar[dr] &\\
\mathbb{CP}^1/H  \ar[d]^{\simeq} \ar[r] & \mathbb{CP}^1/G\ar[d]^{\simeq} \\
S \ar[r]^f & \mathbb{CP}^1
}
$$
where $G$ is the group for the corresponding case from section \ref{auxiliary-section} and $H$ is any of its subgroups.

cases (2,4,4),(2,3,6),(3,3,3),(2,2,2,2):
$$\xymatrix{
T_1 \ar[r]^{\tilde{f}} \ar[d] & T \ar[d]\\
T_1/H  \ar[d]^{\simeq} \ar[r] & T/G\ar[d]^{\simeq} \\
S \ar[r]^f & \mathbb{CP}^1
}
$$
where $T_1$ and $T$ are curves of genus $1$, $\tilde{f}$ is an unramified covering of tori, $G$ is the group for the corresponding case from section 2 and $H$ is one of its subgroups (the automorphism group of the curve $T_1$ should contain $H$).

In particular the Riemann surface $S$ is either the Riemann sphere or (in the cases (2,4,4),(2,3,6),(3,3,3),(2,2,2,2) when $H$ is the trivial group) a torus.

\end{theorem}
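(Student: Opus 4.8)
The plan is to handle all cases at once by pulling $f$ back against the Galois covering supplied by Section~\ref{galois-coverings-section}, and reading off the conclusion from the two local lemmas. The hypothesis $\sigma_i^{d_i}=1$ means precisely that every cycle length of the local monodromy $\sigma_i$ --- i.e. every ramification index of $f$ over $w_i$ --- divides $d_i$. For each of the listed cases I would fix the Galois branched covering $\pi\colon T\to\mathbb{CP}^1$ with deck group $G$ from Sections~\ref{galois-coverings-section} and \ref{auxiliary-section} whose local multiplicities at $w_1,\dots,w_b$ are exactly $d_1,\dots,d_b$; here $T=\mathbb{CP}^1$ in the cases (2,3,3),(2,3,4),(2,3,5) and $T$ is a curve of genus $1$ in the cases (2,4,4),(2,3,6),(3,3,3),(2,2,2,2).

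Next I would form the fibre product $X=S\times_{\mathbb{CP}^1}T$ with its two projections $p\colon X\to S$ and $q\colon X\to T$. The heart of the argument is that $q$ is everywhere unramified. Away from $w_1,\dots,w_b$ this is automatic, since there $\pi$ is an honest covering and a base change of a covering is a covering. Over a point of $w_i$ the map $f$ looks locally like $z\mapsto z^{e}$ with $e\mid d_i$ and $\pi$ looks like $z\mapsto z^{d_i}$; applying Lemma~\ref{local-branching-lemma} with $n=e$, $m=d_i$ and $\gcd(e,d_i)=e$, each disc of $X$ maps to the $T$-factor by $z\mapsto z^{e/\gcd(e,d_i)}=z$, i.e. unramified (while it maps to the $S$-factor by $z\mapsto z^{d_i/e}$). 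Thus $q\colon X\to T$ is an unramified covering, and by Lemma~\ref{two-branchings-lemma} each connected component of $X$ is a curve of the same genus as $T$: a copy of $\mathbb{CP}^1$ in the genus $0$ cases and a torus in the genus $1$ cases.

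Because $\pi$ is Galois with group $G$, the base change $q\colon X\to S$ is again Galois with group $G$ acting through the $T$-factor, and $X/G\cong S$. Since $S$ is connected, $G$ permutes the connected components of $X$ transitively; I would fix one component $T_1$ and set $H=\operatorname{Stab}_G(T_1)\le G$. Then $T_1\to S$ is Galois with group $H$, so $S\cong T_1/H$, the restriction $\tilde f:=q|_{T_1}\colon T_1\to T$ is an unramified covering equivariant for $H\hookrightarrow G$, and $T/G\cong\mathbb{CP}^1$ recovers $f$; this is exactly the asserted diagram. In the genus $0$ cases $\tilde f$ is a connected unramified cover of $\mathbb{CP}^1$, hence an isomorphism, so $T_1=T=\mathbb{CP}^1$ and the top edge collapses as in the statement, whereas in the genus $1$ cases $\tilde f$ is a nontrivial covering of tori. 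The final genus assertion for $S$ is then immediate: if $H$ is trivial then $S=T_1$ is a torus, and otherwise $H$ acts on $T_1$ with fixed points, so $S=T_1/H\cong\mathbb{CP}^1$.

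The two degenerate cases fold in by the same device. In case (*,*) Lemma~\ref{two-branchings-lemma} already gives $S=\mathbb{CP}^1$ and $f\colon z\mapsto z^d$, which is the stated diagram. In case (2,2,*) only two monodromies are constrained, so I would let $d$ be the least common multiple of the ramification indices of $f$ over the unconstrained point $w_3$, take $\pi$ to be the dihedral Galois cover with group $D_d$ and branching $(2,2,d)$, and run the pullback verbatim; the normal cyclic subgroup $\mathbb{Z}_d\trianglelefteq D_d$ produces the edge $z\mapsto z^d$ and the quotient $G=D_d/\mathbb{Z}_d\cong\mathbb{Z}_2$ the remaining involution, with $H\le\mathbb{Z}_2$ and $S\cong\mathbb{CP}^1$. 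The one genuinely substantive step in all of this is the local unramifiedness of $q$ via Lemma~\ref{local-branching-lemma}; this is where, and the only place where, the hypothesis $\sigma_i^{d_i}=1$ is used. Everything after it --- Galois descent of the $G$-action, the orbit--stabilizer identification of $H$, and the genus bookkeeping through Lemma~\ref{two-branchings-lemma} --- is formal, the only care-point being that a nontrivial $H$ acts on $T_1$ with fixed points and that $T_1$ is taken within the covering class carrying the $H$-action, as flagged parenthetically in the statement.
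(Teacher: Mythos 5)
Your proposal is correct and, in case (*,*) and all seven rigid cases, it is essentially the paper's own argument: pull $f$ back along the Galois covering from sections \ref{galois-coverings-section}--\ref{auxiliary-section} whose branch points and multiplicities are matched to the $(w_i,d_i)$, use Lemma \ref{local-branching-lemma} (with $\gcd(e,d_i)=e$) to conclude that the projection to $T$ is unramified, then pass to a connected component $T_1$ with stabilizer $H=\mathrm{Stab}_G(T_1)$, so that $S\cong T_1/H$; the genus claim follows from Lemma \ref{two-branchings-lemma} together with the fixed-point observation you flag at the end (which the paper also asserts without further proof, so you are on equal footing there). The one genuine divergence is case (2,2,*). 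The paper pulls back along the $\mathbb{Z}_2$-cover branched only at $w_1,w_2$; the pullback is then branched over at most the two preimages of $w_3$, and Lemma \ref{two-branchings-lemma} identifies it with $z\to z^d$, which produces the stated diagram on the nose. You instead pull back along the dihedral $(2,2,d)$-cover with $d=\ord\sigma_3$, which makes the pullback everywhere unramified, but what you then obtain is $f\cong\bigl(T/H'\to T/D_d\bigr)$ for some subgroup $H'\le D_d$, and recovering the stated diagram still requires factoring through $\mathbb{Z}_d\trianglelefteq D_d$: the top edge is $T/(H'\cap\mathbb{Z}_d)\to T/\mathbb{Z}_d$, a cyclic quotient map of the form $z\to z^{d/e}$ with $e=|H'\cap\mathbb{Z}_d|$ (not necessarily $z\to z^d$), and $H$ is the image of $H'$ in $D_d/\mathbb{Z}_d\cong\mathbb{Z}_2$. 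Your final sentence gestures at exactly this, and the step is routine, so this is a difference of route rather than a gap; the trade-off is that the paper's smaller deck group yields the claimed diagram directly, while your choice preserves the uniform "everything becomes unramified" mechanism at the cost of this extra group-theoretic bookkeeping. Two harmless slips: the Galois base change onto $S$ is $p$, not $q$; and in the genus-one cases $\tilde f$ need not be nontrivial, since degree $1$ occurs (e.g. for $f=q_G$ itself).
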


\begin{remark} By identifying the quotient spaces by the actions of the groups $G$ and $H$ with $\mathbb{CP}^1$ when possible (i.e. except the cases when $H$ is trivial and acts on a torus) and choosing an appropriate holomorphic coordinate on this $\mathbb{CP}^1$ we get very explicit description of the rational function $f$ (up to change of coordinate in the source and in the target):

In case (2,2,*) the quotient by action of $\mathbb{Z}_2$ can be realized in appropriate coordinates by $z\to\frac{z+z^{-1}}{2}$, so the rational function $f$ is the Chebyshev polynomial: $f(\frac{z+z^{-1}}{2})=\frac{z^d+z^{-d}}{2}$.

In case (2,2,2,2) the quotient by the action of $\mathbb{Z}_2$ can be realized by the Weierstrass $\wp$-function, and thus $f$ is the rational function that expresses the $\wp$-function of a lattice and in terms of the $\wp$-function of its sublattice.

In case (3,3,3) the quotient by the action of $\mathbb{Z}_3$ can be realized by $\wp'$.

In case (2,4,4) the quotient by the action of $\mathbb{Z}_4$ can be realized by $\wp''$.

In case (2,3,6) the quotient by the action of $\mathbb{Z}_6$ can be realized by $\wp^{(4)}$.
\end{remark}

\begin{proof}
In case (*,*) the claim is the content of lemma \ref{two-branchings-lemma}.

In case (2,2,*) identify $T/\mathbb{Z}_2$ (for $T=\mathbb{CP}^1$ and action of $\mathbb{Z}_2$ as in section  \ref{auxiliary-section}) with $\mathbb{CP}^1$ in such a way that the two branching points of the quotient map $q_G:\mathbb{CP}^1\to\mathbb{CP}^1/\mathbb{Z}_2\simeq\mathbb{CP}^1$ coincide with points $w_1$ and $w_2$.

In other cases choose the identification of $T/G$ from section \ref{auxiliary-section} with $\mathbb{CP}^1$ so that the branching points $w_i$ and the numbers $d_i$  attached to them (i.e the numbers so that the local monodromy to the power $d_i$ is the identity) coincide with the branching points for the quotient mapping  $q_G:T\to T/G\simeq\mathbb{CP}^1$ and their local multiplicities.
 
Consider the pullback of the map $q_G$ by the map $f$. Let the pullback maps be called as in the diagram below
$$\xymatrix{
\tilde{T} \ar[r]^{\tilde{f}} \ar[d]^{\tilde{q}_G} & T \ar[d]^{q_G}\\
S \ar[r]^f & \mathbb{CP}^1
}
$$

The map $\tilde{q}_G$ realizes the quotient map by the action of $G$ on a possibly reducible curve $\tilde{T}$.

Let $H$ be the subgroup of $G$ fixing some component of $\tilde{T}$ and let $T_1$ denote this component. Then we can restrict the diagram to this component:
$$\xymatrix{
T_1 \ar[r]^{\tilde{f}} \ar[d]^{q_H} & T \ar[d]^{q_G}\\
S \ar[r]^f & \mathbb{CP}^1
}
$$

(The map $q_H$ is the restriction of $\tilde{q}_G$ to the connected component $T_1$; it realizes the quotient map by the action of $H$).

Now we can use lemma \ref{local-branching-lemma} to investigate how the map $\tilde{f}$ is branched.

For every point $w_i$ in $\mathbb{CP}^1$ for which we know that the local $f$-monodromy $\sigma$ satisfies $\sigma^{d_i}=1$ with $d_i>0$ we know also that the local monodromy of $q_G$ at this point is a disjoint unit of cycles of length $d_i$. Hence $\tilde{f}$ is unramified over any $q_G$-preimage of such points.

In case (2,2,*) this means that $\tilde{f}$ is branched over at most at two points --- the two preimages of the point $w_3$ under $q_G$.

In other cases this means that $\tilde{f}$ is unramified.

In case (2,2,*) lemma \ref{two-branchings-lemma} implies that $T_1$ is the Riemann sphere and by changing identification of $T$ and $T_1$ with $\mathbb{CP}^1$ and choosing an appropriate holomorphic coordinate, we can make it be given by $z\to z^d$.

Hence $f$ fits into the diagram
$$\xymatrix{
\mathbb{CP}^1 \ar[r]^{z\to z^d} \ar[d] & \mathbb{CP}^1 \ar[d]\\
\mathbb{CP}^1/H  \ar[d]^{\simeq} \ar[r] & \mathbb{CP}^1/G \ar[d]^{\simeq} \\
S \ar[r]^{f} & \mathbb{CP}^1
}
$$
as required.

In cases (2,3,3),(2,3,4),(2,3,5) the covering $\tilde{f}$ is unramified covering over the Riemann sphere, hence $\tilde{f}$ is the identity map.

Thus in this case $f$ fits into the diagram
$$\xymatrix{
\mathbb{CP}^1 \ar[d] \ar[dr] &\\
\mathbb{CP}^1/H  \ar[d]^{\simeq} \ar[r] & \mathbb{CP}^1/G\ar[d]^{\simeq} \\
S \ar[r]^f & \mathbb{CP}^1
}
$$

In cases (2,4,4),(2,3,6),(3,3,3),(2,2,2,2) the covering $\tilde{f}$ is an unramified covering of a torus over a torus. In the diagram 
$$\xymatrix{
T_1 \ar[r]^{\tilde{f}} \ar[dd]^{q_H} & T \ar[d]\\
  & T/G\ar[d]^{\simeq} \\
S \ar[r]^f & \mathbb{CP}^1
}
$$
the map $q_H$ must be one of the quotient maps by the action of a finite group of automorphisms of the genus 1 curve $T_1$ as described in section \ref{auxiliary-section}, because the action of $H$ (if $H$ is not trivial) has fixed points and all the actions with this property appeared in section \ref{auxiliary-section}. In this case $S$ is either a torus (if $H$ is trivial) or the Riemann sphere (if it is not).

Finally $f$ fits into the diagram
$$\xymatrix{
T_1 \ar[r]^{\tilde{f}} \ar[d] & T \ar[d]\\
T_1/H  \ar[d]^{\simeq} \ar[r] & T/G\ar[d]^{\simeq} \\
S \ar[r]^f & \mathbb{CP}^1
}
$$
as required.
\end{proof}

\section{When branching data implies global properties} \label{branching-implies-section}
In this section we will answer questions \ref{bounded-genus-question} and \ref{simple-algebra-question}. The answers to these questions explain why the branching data that appear in the formulation of theorem \ref{classification} are in some sense optimal. The result we will be proving is the following.

\begin{theorem}
Let $w_1,\ldots,w_b\in \mathbb{CP}^1$ be a given collection of points and let $d_1,\ldots,d_b\in\mathbb{N}\cup \{0\}$ be some given natural numbers. Consider the collection of holomorphic functions $R$ from some compact Riemann surface to $\mathbb{CP}^1$ that are branched over the points $w_1,\ldots,w_b$ with the local monodromy $\sigma_i$ around the point $w_i$ satisfying $\sigma_i^{b_i}=1$.

The genus of the source Riemann surface for all functions in this collection is bounded if and only if the numbers $(d_1,\ldots,d_b)$ are the ones that appear in theorem \ref{classification}.

All the functions in this collection have inverses expressible in radicals if and only if the numbers $(d_1,\ldots,d_b)$ are the ones that appear in theorem \ref{classification}, with the exception of case $(2,3,5)$.
\end{theorem}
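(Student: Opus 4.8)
The plan is to govern both equivalences by the sign of the orbifold Euler characteristic
$$\chi_{\mathrm{orb}} := 2 - \sum_{i=1}^{b}\Bigl(1 - \tfrac{1}{d_i}\Bigr),$$
with the convention $1/d_i := 0$ when $d_i = 0$. First I would record the purely arithmetic fact that the tuples $(d_1,\dots,d_b)$ listed in Theorem \ref{classification} are exactly those with $\chi_{\mathrm{orb}} \ge 0$ (one checks $\chi_{\mathrm{orb}} > 0$ only for $(2,3,3),(2,3,4),(2,3,5)$ and $\chi_{\mathrm{orb}} = 0$ for the remaining listed tuples); this is the same finite Diophantine enumeration already performed in Section \ref{galois-coverings-section}. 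For the genus statement the easy direction is ``$\chi_{\mathrm{orb}} \ge 0 \Rightarrow$ bounded genus'': if $R$ has degree $d$, then $\sigma_i^{d_i}=1$ forces every cycle of $\sigma_i$ to have length dividing $d_i$, so the fibre over $w_i$ has at least $d/d_i$ points and contributes at most $d(1-1/d_i)$ to the total ramification. Riemann--Hurwitz then gives $\chi(\tilde S) \ge d\,\chi_{\mathrm{orb}} \ge 0$, i.e. $g(\tilde S) \le 1$ for every $R$ in the collection.

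For the reverse direction I would produce, whenever $\chi_{\mathrm{orb}} < 0$, coverings of unbounded genus by working inside the orbifold fundamental group $\Delta = \langle \gamma_1,\dots,\gamma_b \mid \gamma_i^{d_i} = 1,\ \gamma_1\cdots\gamma_b = 1\rangle$, which for $\chi_{\mathrm{orb}} < 0$ is an infinite Fuchsian group. By Selberg's lemma $\Delta$ has a torsion-free subgroup of finite index, and inside it finite-index subgroups $\Gamma$ of arbitrarily large index $N$; each such $\Gamma$ is torsion-free, so the associated degree-$N$ branched covering has every $\sigma_i$ a product of full $d_i$-cycles, the Riemann--Hurwitz estimate above becomes an equality, and $\chi(\tilde S) = N\chi_{\mathrm{orb}} \to -\infty$. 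Transitivity of $\Delta$ on the cosets guarantees the covering is connected, so its genus is genuinely unbounded.

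For the radical statement I would use the classical criterion that $w = R(z)$ is solvable in radicals precisely when the monodromy group of $R$ is solvable. When $\chi_{\mathrm{orb}}\ge 0$ and the tuple is not $(2,3,5)$, Theorem \ref{classification} presents every $R$ as dominated by a Galois covering $T_1 \to \mathbb{CP}^1$ whose deck group is (in the spherical cases) one of the finite groups $\mathbb{Z}_d$, $D_d$, $A_4$, $S_4$, and (in the Euclidean cases) an extension of a finite cyclic group by the abelian deck group of the isogeny $\tilde f$. All of these are solvable, and the monodromy group of $R$ is a quotient of such a group, hence solvable; so every $R$ in the collection has inverse expressible in radicals. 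The two remaining families are disposed of by a single bad example each: for $(2,3,5)$ the icosahedral function has monodromy group $A_5$, which is not solvable; and for $\chi_{\mathrm{orb}} < 0$ the hyperbolic group $\Delta$ surjects onto a non-solvable finite group (for instance a large alternating group), and the associated covering $R$ lies in the collection yet has non-solvable monodromy.

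The main obstacle, shared by both reverse implications, is the group theory feeding the Riemann existence step: one must know that a hyperbolic von Dyck group $\Delta$ has torsion-free finite-index subgroups of unbounded index (for the genus statement) and non-solvable finite quotients in which the generators keep order dividing $d_i$ (for the radical statement). I would isolate these as the standard facts about quotients of triangle and polygon groups collected in the appendix \ref{appendix-section}, so that the analytic content reduces to Riemann--Hurwitz and the Riemann existence theorem.
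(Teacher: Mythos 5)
Your overall strategy is correct but genuinely different from the paper's, so let me compare. The paper never uniformizes anything: it relies on two elementary observations --- the minimal Galois covering dominating a member of the collection is again a member (its local monodromies have the same orders, dividing $d_i$), and precomposition with an unramified covering of the source keeps one in the collection --- and then argues that if the collection contains any function outside the list of Theorem~\ref{classification}, the Galois closure of that function has genus at least $2$, so its unramified coverings yield members of unbounded genus and members with non-solvable monodromy. Your argument replaces this with hyperbolic geometry: Selberg's lemma plus subgroup growth in the Fuchsian group $\Delta$ manufactures the high-genus members, and non-solvable finite quotients of $\Delta$ manufacture the non-radical ones. This costs more machinery (and note that ``$\Delta$ surjects onto a large alternating group'' is a genuinely hard theorem of Liebeck and Shalev; you only need one non-solvable finite quotient, which you can get by taking the core in $\Delta$ of the kernel of a surjection from a Selberg surface subgroup onto $A_5$), but it buys something real: the paper's proof begins ``suppose the collection contains some function that is not one of those mentioned in theorem \ref{classification}'' and never verifies that such a function exists when $(d_1,\ldots,d_b)$ is not a listed tuple. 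Your $\Delta$-construction is exactly that missing existence step, i.e.\ the Riemann existence input the paper leaves implicit. Your two positive directions (the bound $\chi(\tilde S)\ge d\,\chi_{\mathrm{orb}}$ via Riemann--Hurwitz, and solvability of the dominating deck groups --- cyclic, dihedral, $A_4$, $S_4$, and lattice-by-cyclic metabelian groups in the Euclidean cases) match the paper's in substance.

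Two genuine defects, both repairable. First, and most importantly, your unboundedness step breaks when some $d_i=0$: then $\gamma_i$ has infinite order, the orbifold has a cusp rather than a cone point, and for a torsion-free finite-index $\Gamma\le\Delta$ the compactified covering surface satisfies $\chi(\tilde S)=N\chi_{\mathrm{orb}}+\sum_{d_i=0}c_i$, where $c_i$ is the number of cusps over $w_i$, not $\chi(\tilde S)=N\chi_{\mathrm{orb}}$. This is fatal as stated: for the unlisted tuple $(0,0,0)$ the group $\Delta$ is free of rank $2$, every finite-index subgroup is torsion-free, and the subgroups corresponding to $z\mapsto z^N$ give genus-$0$ members of arbitrarily large index, so ``index $\to\infty$ implies genus $\to\infty$'' is simply false there. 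The repair is cheap: replace every zero entry by a finite integer $m$ large enough that $\chi_{\mathrm{orb}}$ remains negative (each replacement raises it by only $1/m$); the modified tuple is cocompact, your argument applies verbatim, and its members lie in the original collection because $d_i=0$ imposed no condition. (The radical half does not need this repair: any non-solvable finite quotient of the original $\Delta$ already gives a member with non-solvable monodromy.) Second, your opening ``arithmetic fact'' is not literally true: tuples such as $(2,2,n)$ with $n$ finite, or $(n,m)$ with $b=2$, have $\chi_{\mathrm{orb}}>0$ yet do not appear verbatim in Theorem~\ref{classification}; their collections are sub-collections of the starred cases $(2,2,*)$ and $(*,*)$. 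This imprecision is inherited from the statement of the theorem itself and afflicts the paper's proof equally, but your write-up should route such tuples through the corresponding starred case rather than assert the equivalence of ``listed'' with ``$\chi_{\mathrm{orb}}\ge 0$''.
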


\begin{proof}
For the proof we make two observations: the first one is that if the function $R:S\to \mathbb{CP}^1$ belongs to this collection, then the minimal branched Galois covering $\tilde{R}:\tilde{S}\to\mathbb{CP}^1$ that dominates $R$ also belongs to it. The second observation is that if $S_1\to S$ is an unramified covering and $R:S\to \mathbb{CP}^1$ is a function from the collection, then the composite function $S_1\to S\to \mathbb{CP}^1$ also belongs to the collection.

Now suppose that for given $w_1,\ldots,w_b$ and $d_1,\ldots,d_b$ as above, the collection contains some function that is not one of those mentioned in theorem \ref{classification}. Then it also contains its "Galois closure" - the minimal Galois branched covering that dominates it, and the source space of this Galois closure has genus at least 2. Hence we can find an unramified covering over it with total space of arbitrary high genus, contradicting the assumption of the first claim. Also we can find an unramified covering over it with unsolvable monodromy group, so that the inverse to this mapping can't be expressed in radicals, contradicting the assumptions of the second claim.

Finally for the branching data appearing in theorem \ref{classification} the genus of the source Riemann surface is either 0 or 1. Similarly the possible monodromy groups for all the branching data from theorem \ref{classification} are all solvable, with the exception of the group $A_5$ in case (2,3,5).

\end{proof}
 
\section{Applications to Ritt's problem} \label{ritt-problem-section}

In work \cite{Ritt} Ritt was interested in rational functions whose inverses were expressible in radicals. Over the field of complex numbers this problem is equivalent to describing branched coverings of the Riemann sphere over itself with solvable monodromy group.

If we are interested in rational functions that can't be expressed as compositions of other rational functions, we should only consider those branched coverings of $\mathbb{CP}^1$ over $\mathbb{CP}^1$ with solvable primitive monodromy group (see section \ref{appendix-section}).

Result of Galois tells that a primitive action of a solvable group on a finite set can be identified with an irreducible action of a subgroup of the group of affine motions of the vector space $F_p^n$.

In case $n=1$ above, i.e. when the degree of the rational function is a prime number, the action can be identified with action of a subgroup of $F_p^*\rtimes F_p$ on $F_p$.

Now let $w_1,\ldots,w_b$ be the branching points and let the local monodromies around these points be given by $x\rightarrow a_i x+b_i \mod p$ in the above identification. A simple application of Riemann-Hurwitz formula shows \[\sum{\frac{1}{\ord a_i}}=b-2\] with the convention that $\ord 1=\infty$. 

This equation has the following solutions for $\ord a_i$:

1. $\frac{1}{\infty}+\frac{1}{\infty}=2-2$

2. $\frac{1}{2}+\frac{1}{2}+\frac{1}{\infty}=3-2$

3. $\frac{1}{2}+\frac{1}{3}+\frac{1}{6}=3-2$

4. $\frac{1}{2}+\frac{1}{4}+\frac{1}{4}=3-2$

5. $\frac{1}{3}+\frac{1}{3}+\frac{1}{3}=3-2$

6. $\frac{1}{2}+\frac{1}{2}+\frac{1}{2}+\frac{1}{2}=4-2$

It is easy to check that if $\ord a_i \neq \infty$ then the $\ord a_i$th power of the permutation $x\rightarrow a_i x+b_i$ is the identity.

Thus we are exactly in the situation described in theorem \ref{classification} and the results thereof apply here.

Moreover, since the degree of our covering is prime, degree considerations force the subgroup $H$ from theorem \ref{classification} to be equal to the group $G$.

Thus we get immediately that we can change coordinates in the source and the target $\mathbb{CP}^1$ so that our function $f$ becomes either the map $z\rightarrow z^p$ or fits into one of the following diagrams
$$\xymatrix{
\mathbb{CP}^1 \ar[r]^{z\rightarrow z^p} \ar[d]_{\frac{z+1/z}{2} \gets z} & \mathbb{CP}^1 \ar[d]^{z\rightarrow \frac{z+1/z}{2}}\\
\mathbb{CP}^1 \ar[r]^f & \mathbb{CP}^1
}
$$

$$\xymatrix{
T_1 \ar[r]^{\tilde{f}} \ar[d]^{q_G} & T_2 \ar[d]^{q_G}\\
\mathbb{CP}^1 \ar[r]^f & \mathbb{CP}^1
}
$$
where $T_1$ and $T_2$ are tori on which the same group $G$ acts with fixed points, $\tilde{f}$ is an unbranched covering of tori, the vertical arrows are quotients by the action of $G$ and $G$ is $\mathbb{Z}_2$, $\mathbb{Z}_3$, $\mathbb{Z}_4$ or $\mathbb{Z}_6$.

\section{Three classifications of Ritt's functions} \label{classification-section}

Here we would like to obtain slightly more precise results, that show how many rational functions invertible in radicals are there up to equivalence. We will consider three equivalence relations: left analytical equivalence ($f_1$ and $f_2$ are equivalent if there exists a Mobius transformation $\mathbb{CP}^1\stackrel{\sim}{\to} \mathbb{CP}^1$ so that the diagram
$$\xymatrix{
\mathbb{CP}^1 \ar[r]^{\sim} \ar[dr]^{f_1} & \mathbb{CP}^1 \ar[d]^{f_2}\\
  & \mathbb{CP}^1
}
$$
commutes), left-right analytical equivalence ($f_1$ and $f_2$ are equivalent if there exist two Mobius transformations $\mathbb{CP}^1\stackrel{\sim}{\to} \mathbb{CP}^1$ so that the diagram
$$\xymatrix{
\mathbb{CP}^1 \ar[r]^{\sim} \ar[d]^{f_1} & \mathbb{CP}^1 \ar[d]^{f_2}\\
\mathbb{CP}^1  \ar[r]^{\sim} & \mathbb{CP}^1
}
$$
commutes), and topological equivalence (which is defined by the same diagram, but instead of Mobius transformations we have orientation-preserving homeomorphisms of the sphere).

We start with the easiest of these, the left-analytical equivalence.

We will consider the case (2,2,2,2) in some details and only state the result for other cases.

Let $w_1,w_2,w_3,w_4$ be four branching points in $\mathbb{CP}^1$. We want to count how many rational functions of prime degree $p$ are there up to left analytical equivalence with branching points $w_1,w_2,w_3,w_4$ so that their inverses are expressible in radicals.

Consider the two-sheeted branched covering $q:T\rightarrow \mathbb{CP}^1$ over $\mathbb{CP}^1$ branched over $w_1,w_2,w_3,w_4$. Let $O\in T$ denote the preimage of $w_1$. We can identify the space $T$ with an elliptic curve with zero at $O$.

Now we claim that the set of left equivalence classes of degree $p$ isogenies from some elliptic curve to $T$ is in bijection with the set of left analytical equivalence classes of rational functions we are interested in.

The bijection is constructed in the following way.

Take the left equivalence class of an isogeny $s:T_1\rightarrow T$. The composition $q\circ s$ satisfies $q\circ s(z)=q\circ s (-z)$ for every $z\in T_1$ (because $s(-z)=-s(z)$ and $q(-s(z))=q(s(z))$).

Hence it factors through $T_1/z\sim -z$. Choose an isomorphism of $T_1/z\sim -z$ with $\mathbb{CP}^1$. The bijection sends the class of $s$ to the class of the resulting map $\tilde{s}:\mathbb{CP}^1\cong T_1/z\sim -z\rightarrow\mathbb{CP}^1$ fitting into the diagram
$$\xymatrix{
T_1  \ar[d] \ar[rr]^{s} && T \ar[d]^{q}\\
T_1/z\sim -z &\cong\mathbb{CP}^1 \ar[r]^{\tilde{s}} & \mathbb{CP}^1
}
$$

It is easy to check that this map is well-defined.

The inverse of this map can be defined in the following way. Consider left equivalence class of rational mapping $g:\mathbb{CP}^1\rightarrow \mathbb{CP}^1$ of degree $p$ invertible in radicals and branched over $w_1,\ldots,w_4$.

Galois lemma tells us that the squares of local monodromies around points $w_1,\ldots,w_4$ are identity.

Result of section \ref{branching-section} tells us that in the  pullback diagram
$$\xymatrix{
T_1 \ar[r]^{\tilde{g}} \ar[d]^{\tilde{q}} & T \ar[d]^{q}\\
\mathbb{CP}^1 \ar[r]^g & \mathbb{CP}^1
}
$$

the space $T_1$ is a torus, $\tilde{g}$ is an unramified covering of tori. Hence if we choose the origin in $T_1$ to be one of the preimages of $O$, we get that $\tilde{g}$ is an isogeny of degree $p$ over $T$.

We send the class of $g$ to the class of $\tilde{g}$. This map is also well-defined and is the inverse of the map we had defined previously.

Hence all we should do is count the equivalence classes of degree $p$ isogenies over an elliptic curve $T$. These are enumerated by the subgroups of $\pi_1(T,O)$ of index $p$. Since $\pi_1(T,O)$ is isomorphic to $\mathbb{Z}^2$, the number we are interested in is $p+1$: index $p$ subgroups of $\mathbb{Z}^2$ are in bijection with index $p$ subgroups of $\mathbb{Z}_p^2$, i.e. the points of $\mathbb{P}^1(\mathbb{F}_p)$.

Next we will construct the space that parametrizes left-right equivalence classes of rational mappings of prime degree with four branching points and inverses expressible in radicals.

First we claim that instead of parameterizing such left-right classes of rational mappings, we can parametrize isogenies of genus one Riemann surfaces with marked quadruples of points which are the fixed points of an involution with fixed points (another way to say it is that the four distinct marked points $w_1,...,w_4$ satisfy $2w_i\sim 2w_j$, where $\sim$ stands for linear equivalence)

Indeed, to any rational function $g:\mathbb{CP}^1\rightarrow \mathbb{CP}^1$ branched at four points with the property that the square of monodromy around each one of them is the identity, we can associate the isogeny $\tilde{g}:T_1\rightarrow T$ of elliptic curves that fits into the pullback diagram 
$$\xymatrix{
T_1 \ar[r]^{\tilde{g}} \ar[d]^{\tilde{q}} & T \ar[d]^{q}\\
\mathbb{CP}^1 \ar[r]^g & \mathbb{CP}^1
}
$$
where $q$ is the double covering over $\mathbb{CP}^1$ with four branching points that coincide with the branching points of $g$. The mapping $\tilde{q}$ must be branched over the four regular preimages under $g$ of the branching points of $g$. The curves $T_1$ and $T$ have genus 1 and are equipped with involutions with fixed points - the involutions that interchange the two sheets of the covering $q$ and of the covering $\tilde{q}$.

Two rational functions $g_1$ and $g_2$ are left-right equivalent if and only if they give rise to isomorphic isogenies $\tilde{g_1}$ and $\tilde{g_2}$ (in the sense that the isogenies are isomorphic and marked points get carried to marked points by the isomorphisms).

Now we can choose origin on the curve $T$ to be at one of the marked points and the origin of $T_1$ to be at its preimage under $\tilde{g}$. This way we get an isogeny of elliptic curves of prime degree. Different choices of origin give rise to isomorphic isogenies of elliptic curves (composition with translations provide the isomorphisms) and hence we have to parametrize the space of isogenies of degree $p$ of elliptic curves.

This space is known (see for instance \cite{Shimura}) to be the modular curve that is the quotient of the upper half-plane $\mathbb{H}$ by the action of the group $\{\left(\begin{matrix}a & b \\ c & d\end{matrix}\right) \in PSL_2(\mathbb{Z})|c\equiv 0 \mod p\}$. This curve admits a degree $p+1$ branched covering over the moduli space of elliptic curves (the covering map sends the isogeny to its target elliptic curve). This covering is branched over two points --- the classes of elliptic curves $\mathbb{C}/\langle1,i\rangle$ and $\mathbb{C}/\langle1,\omega\rangle$.

Thus for a generic choice of 4 branching points for the rational map of degree $p$ with inverse invertible in radicals there are $p+1$ left-equivalence classes of such rational functions and all of them are not left-right equivalent to each other. For the choices of these branching points with the property that they are either harmonic or have cross ratio $e^{\pm i \pi /3}$, among the $p+1$ left-equivalence classes there are left-right equivalent ones. Namely for the case that the branching points are harmonic, the $p+1$ left-equivalence classes get partitioned by the left-right analytic equivalence relation to $\frac{p-1}{2}$ pairs and 2 singletons if $p\equiv 1 \mod 4$ and just to $\frac{p+1}{2}$ pairs if $p\equiv 3\mod 4$ (if $p=2$, the partition is to one pair and one singleton). For the case of cross ratio  $e^{\pm i \pi /3}$, the $p+1$  left-equivalence classes get partitioned by the left-right analytic equivalence relation to $\frac{p-1}{3}$ triples and 2 singletons if $p\equiv 1 \mod 6$ and just to $\frac{p+1}{3}$ triples if $p\equiv 5\mod 6$ (if $p=3$, the partition is to one triple and one singleton, if $p=2$, the partition is to one triple).

Finally there is only one class of such mappings up to left-right topological equivalence, since the modular curve is connected.

In other cases the picture is even simpler, since we don't have any moduli in question.

Namely, for the case $(2,4,4)$, to count left-equivalence classes we should count the number of sublattices of index $p$ of the lattice of Gaussian integers, which are invariant under multiplication by $i$. These exist only if $p=2$ or $p\equiv 1 \mod 4$, and then, if $p=2$, there is only one such, and if $p\equiv 1 \mod 4$, there are two such. These classes of left-equivalence are not left-right-analytically equivalent, and hence also not topologically equivalent with orientation preserving homeomorphisms (they are the connected components of the corresponding Hurwitz scheme).

Similarly in cases $(2,3,6)$ and $(3,3,3)$ we should count the number of sublattices of the lattice of Eisenstein integers, which are invariant under multiplication by the primitive cubic root of unity $\omega$. These exist only if $p=3$ or $p\equiv 1 \mod 6$. If $p=3$ there is one such, and in case $p=1\mod 6$, there are two such.

\section{Explicit formulae for the rational functions invertible in radicals} \label{formulae_section}

In the previous sections we've seen that the non-polynomial rational functions of prime degree with the inverse expressible in radicals are in fact the same as the rational functions that express some particular elliptic function on a lattice in terms of an elliptic function on its sublattice. The elliptic functions that appeared were the functions that generated the field of elliptic functions on the lattice, invariant under a certain group of automorphisms of the elliptic curve. For instance in the $(2,2,2,2)$ case, the corresponding rational function should have expressed the Weierstrass function of a lattice in terms of the Weierstrass function for a sublattice.

Since we would like to derive the expressions for the corresponding rational functions uniformly for cases $(2,4,4)$,$(3,3,3)$,$(2,3,6)$ and $(2,2,2,2)$, we are going to use the following notations:

$S_\Lambda(z)$ denotes
\begin{itemize}
\item{$\wp_\Lambda(z)$ in case $(2,2,2,2)$}

\item{$\wp'_\Lambda(z)$ in case $(3,3,3)$}

\item{$\wp''_\Lambda(z)$ in case $(2,4,4)$}

\item{$\wp^{(4)}_\Lambda(z)$ in case $(2,3,6)$ }
\end{itemize}
Also $W$ will denote the group of roots of unity of degree
\begin{itemize}
\item{$2$ in case $(2,2,2,2)$}

\item{$3$ in case $(3,3,3)$}

\item{$4$ in case $(2,4,4)$}

\item{$6$ in cases $(2,3,6)$}
\end{itemize}

We will denote by $g[0]$ the constant term in a Laurent expansion of the meromorphic function $g$ at $0$.

We will need to use the following formula relating the Weierstrass $\wp$-function with the Jacobi theta function $\theta$:
$$\log(\theta(z))''=\wp(z)+c$$
where $c$ is a constant.

Now we are ready:

Let $\Lambda$ be a sublattice of index $p$ in a lattice $\Lambda'$ and assume that both $\Lambda$ and $\Lambda'$ are invariant under multiplication by elements of $W$. We want to find a rational function $R$ such that $S_{\Lambda'}(z)=R(S_\Lambda(z))$.

Since the functions $S_{\Lambda'}$ and $\sum_{u\in \Lambda'/{\Lambda}}{S_\Lambda(z-u)}$ have the same poles and are both periodic with respect to $\Lambda'$, we can write $$S_{\Lambda'}(z)-S_{\Lambda'}[0]=\sum_{0\neq u\in \Lambda'/{\Lambda}}{(S_\Lambda(z-u)-S_\Lambda(-u))} +S_\Lambda(z) - S_{\Lambda}[0]$$

We are now going to group the summands to $W$-orbits and find a formula for $\sum_{\xi\in W}{(S_\Lambda(z-\xi u)-S_\Lambda(- \xi u))}$ in terms of $S_\Lambda(z)$.

To do so, let $f(u)=\prod_{\xi\in W}{\frac{\theta(z-\xi u)}{\theta(-\xi u)}}$. The function $f$ is periodic with respect to $\Lambda$, has simple zeroes at $u=\xi z$ and a pole of order $|W|$ at $0$. Hence $f(u)=C (S_\Lambda(u)-S_\Lambda(z))$ for some constant $C$.

Now we can take the logarithm of both sides and differentiate the result with respect to $u$ (note that the differentiation is with respect to $u$, not $z$!) $|W|$ times. The result is the beautiful formula

$$(-1)^{|W|}\sum_{\xi\in W} {(S_\Lambda(z-\xi u)-S_\Lambda(-\xi u))}=\frac{\partial^{|W|}}{\partial{u}}{\log{(S_\Lambda(u)-S_\Lambda(z))}}$$

Note that the right hand side is a rational function of $S_\Lambda(z)$.

Hence we get 
\begin{align*} S_{\Lambda'}(z)-S_{\Lambda'}[0] =&\sum_{0\neq u \in (\Lambda'/\Lambda)/W}{\sum_{\xi\in W}{(S_{\Lambda}(z-\xi u)-S_{\Lambda}(-\xi u))}}&+S_{\Lambda}(z)-S_{\Lambda}[0] \\ = (-1)^{|W|}&\sum_{0\neq u \in (\Lambda'/\Lambda)/W}\frac{\partial^{|W|}}{\partial{u}}{\log{(S_\Lambda(u)-S_\Lambda(z))}} &+S_{\Lambda}(z)-S_{\Lambda}[0]\end{align*}

So finally $$R(w)=(-1)^{|W|}\sum_{0\neq u \in (\Lambda'/\Lambda)/W}\frac{\partial^{|W|}}{\partial{u}}{\log{(S_\Lambda(u)-w)}} +w +S_{\Lambda'}[0] -S_{\Lambda}[0]$$

For instance in the case $(2,2,2,2)$, with lattice $\Lambda'$ generated by $1$ and $\tau$ and the lattice $\Lambda$ generated by $p,\tau$, the function $R$ is $$R(w)=\sum_{u=1}^{\frac{p-1}{2}}{\left(\frac{\wp''(u)}{\wp(u)-w}-\left(\frac{\wp'(u)}{\wp(u)-w}\right)^2\right)}+w$$

\section{Auxiliary results used in the paper} \label{appendix-section}

One reason for an algebraic function to be expressible in radicals is that it is a composition of two such functions. Thus one can be interested in theorems that characterize this situation. The following two theorems are of this kind:

\begin{theorem} Let $f:(X,x_0)\rightarrow (Z,z_0)$ be a covering map between two (connected, locally simply connected) pointed spaces and let $M_f:\pi_1(Z,z_0)\rightarrow S(f^{-1}(z_0))$ be the monodromy mapping. The covering $f$ can be decomposed as a composition of two coverings $g:(X,x_0)\rightarrow (Y,y_0)$ ang $h:(Y,y_0)\rightarrow (Z,z_0)$ if and only if the monodromy group $M_f(\pi_1(Z,z_0))$ acts imprimitively on $f^{-1}(z_0)$
\end{theorem}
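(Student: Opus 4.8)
The plan is to reduce the statement entirely to the Galois correspondence for covering spaces together with one standard fact about blocks of imprimitivity. Throughout write $G=\pi_1(Z,z_0)$ and $H=f_*\pi_1(X,x_0)\le G$. The first step is the standard identification of the monodromy $G$-set: the fiber $f^{-1}(z_0)$ is $G$-equivariantly isomorphic (via path-lifting) to the space of cosets of $H$ in $G$, under an isomorphism carrying the basepoint $x_0$ to the trivial coset, so that $H$ is precisely the stabilizer of $x_0$. Since $X$ is connected the action is transitive. The monodromy group $M_f(G)$ is the image of $G$ in the symmetric group on the fiber; because the blocks of a permutation action depend only on the image subgroup, the hypothesis ``$M_f(G)$ acts imprimitively'' means exactly that this transitive $G$-set admits a nontrivial block system.

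Second, I would invoke the lifting/classification dictionary, valid because our hypotheses make $Z$ connected, locally path-connected and semilocally simply connected. Pointed connected coverings $h:(Y,y_0)\to(Z,z_0)$ correspond bijectively to subgroups $K=h_*\pi_1(Y,y_0)\le G$, and by the lifting criterion there is a (necessarily unique, basepoint-preserving) map $g:(X,x_0)\to(Y,y_0)$ with $h\circ g=f$ if and only if $H\le K$, in which case $g$ is itself a covering. Thus pointed factorizations $f=h\circ g$ are in bijection with the intermediate subgroups $H\le K\le G$, the degrees being $\deg h=[G:K]$ and $\deg g=[K:H]$. A genuine decomposition, meaning both $g$ and $h$ of degree $>1$ (which is the only sensible reading of ``decomposed as a composition of two coverings'', since an isomorphism factor is vacuous), corresponds precisely to a \emph{proper} intermediate subgroup $H\subsetneq K\subsetneq G$.

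Third, apply the group-theoretic correspondence between intermediate subgroups and blocks: for a transitive action of $G$ on a set $\Omega$ with stabilizer $H$ of a point $\alpha$, the assignment $K\mapsto K\alpha$ is an inclusion-preserving bijection from subgroups $H\le K\le G$ onto the blocks of $\Omega$ containing $\alpha$, with inverse $B\mapsto\{g\in G:gB=B\}$. I would include the short verification that $K\alpha$ is a block (if $gk_1\alpha=k_2\alpha$ then $k_2^{-1}gk_1\in H\le K$, forcing $g\in K$) and that $K=H$ and $K=G$ give the singleton $\{\alpha\}$ and all of $\Omega$. Hence a nontrivial block system exists, i.e.\ the action is imprimitive, if and only if a proper intermediate $K$ exists. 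Chaining the three steps yields the theorem: $f$ admits a nontrivial decomposition $\iff$ a proper intermediate subgroup exists $\iff$ the monodromy action is imprimitive.

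The covering-space bookkeeping is routine; the one place warranting care, and which I regard as the main obstacle, is confirming that an intermediate factor $g$ produced by the lifting criterion is genuinely a covering map (not merely continuous) and that, conversely, a topological factorization of the covering $f$ really produces a covering $h$ whose image subgroup is intermediate. Both follow from local path-connectedness and semilocal simple connectedness, but one must check that these properties, implied here by ``locally simply connected'', are inherited by the intermediate space $Y$. A secondary point to pin down is the reading of the statement that excludes trivial factorizations, so as to match the standard convention that an imprimitive action is by definition transitive with a nontrivial block system.
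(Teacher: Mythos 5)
Your proposal is correct and takes essentially the same route as the paper: both rest on identifying the fiber with the coset space of $f_*\pi_1(X,x_0)$, the Galois correspondence between pointed coverings of $Z$ and subgroups of $\pi_1(Z,z_0)$ (via the lifting criterion), and the dictionary between imprimitivity blocks and intermediate subgroups. The only difference is organizational — you state the block/intermediate-subgroup bijection as a standalone lemma and chain three bijections, whereas the paper verifies the two implications directly (blocks as $g$-preimages in one direction, the covering associated to the block stabilizer in the other) — and your explicit handling of the nontriviality of the factorization is a point the paper leaves implicit.
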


\begin{theorem}Let $G\leq S(X)$ be a solvable primitive group of permutations of a finite set $X$. Then the set $X$ can be identified with the $F_p$-vector space $F_p^n$ for some prime number $p$ and number $n\geq 1$ in such a way that the group $G$ gets identified with a subgroup of affine motions of the vector space $F_p^n$ that contains all translations. 
\end{theorem}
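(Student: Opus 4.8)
The plan is to locate a regular normal elementary abelian subgroup $N$ of $G$ and use it to identify $X$ with a vector space on which $G$ acts affinely. First I would choose $N$ to be a minimal normal subgroup of $G$. Since $G$ acts primitively and faithfully, $N$ must be transitive: the orbits of any normal subgroup form a system of blocks for $G$, and primitivity forces this system to consist either of all singletons --- impossible, since a nontrivial normal subgroup of a faithful permutation group cannot act trivially --- or of the single block $X$ itself, so $N$ is transitive on $X$.

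Next I would pin down the isomorphism type of $N$. Being a minimal normal subgroup, $N$ is characteristically simple, hence a direct product of isomorphic simple groups; since $G$ is solvable so is $N$, which forces those simple factors to be cyclic of a common prime order $p$. Thus $N\cong F_p^n$ for some $n\geq 1$. Because $N$ is abelian and transitive it is in fact regular: every stabilizer $N_x$ is normal in the abelian group $N$, and the point stabilizers of a transitive group are mutually conjugate, so all $N_x$ coincide and therefore fix every point, whence $N_x=1$ by faithfulness.

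With $N$ regular I would identify $X$ with $N\cong F_p^n$ by fixing a base point $x_0$ and sending $x\in X$ to the unique $n\in N$ with $n\cdot x_0=x$. Under this identification $N$ acts by translations, $m\colon n\mapsto mn$, and since $N$ has order $p^n$ and acts regularly it is precisely the full translation group of $F_p^n$. Writing $G=N\rtimes G_{x_0}$ --- a semidirect product because regularity gives $N\cap G_{x_0}=1$ and $NG_{x_0}=G$ --- I would compute that an element $g\in G_{x_0}$ sends $n\cdot x_0$ to $(gng^{-1})\cdot x_0$, so $G_{x_0}$ acts on $N=F_p^n$ by conjugation, i.e.\ by group automorphisms. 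As the automorphism group of $F_p^n$ is exactly the group of invertible $F_p$-linear maps, $G_{x_0}$ is realized as a group of linear transformations, and therefore $G=N\rtimes G_{x_0}$ is realized as a group of affine motions of $F_p^n$ containing the full translation group $N$, as required.

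The main obstacle is the second step, the passage from \emph{minimal normal subgroup} to \emph{elementary abelian}: this rests on the structure theorem for characteristically simple groups together with solvability, and it is where the hypothesis that $G$ is solvable is genuinely used. Everything else --- transitivity of nontrivial normal subgroups under primitivity, the fact that a transitive abelian group is regular, and the affine bookkeeping via the semidirect decomposition --- is routine once the subgroup $N$ has been identified.
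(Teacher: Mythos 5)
Your proof is correct, and the endgame --- transitivity of a nontrivial normal subgroup under primitivity, regularity of a transitive abelian group, identification of $X$ with $N$ via a base point, and the conjugation action of $G_{x_0}$ giving linear maps --- matches the paper's argument essentially step for step. Where you genuinely differ is in how you manufacture the elementary abelian normal subgroup $N$. You take a \emph{minimal} normal subgroup and invoke the structure theorem for characteristically simple groups (a finite minimal normal subgroup is a direct product of isomorphic simple groups), then use solvability to force the simple factors to be cyclic of prime order. The paper instead takes the last nontrivial term $G^{(k-1)}$ of the derived series, which is an abelian normal subgroup by solvability, and then passes to its $p$-torsion subgroup $\{n\in G^{(k-1)} : n^p=1\}$, which is characteristic in $G^{(k-1)}$ and hence normal in $G$, and is elementary abelian by construction. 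The paper's route is more elementary and self-contained: it needs only the derived series and the fact that a characteristic subgroup of a normal subgroup is normal, with no appeal to the classification of characteristically simple groups. Your route relies on a heavier (though standard) structural tool, but it is the more conceptual one --- it is exactly the first step of the modern analysis of primitive groups with an abelian minimal normal subgroup (the affine case of the O'Nan--Scott framework), and it isolates cleanly the single point where solvability enters. Both arguments are complete and correct; the choice is between self-containedness and generality of viewpoint.
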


\begin{corollary}
Let $f:(X,x_0)\rightarrow (Z,z_0)$ be a covering map with a solvable group of monodromy. Then it can be decomposed as a composition of covering maps $(X,x_0)\xrightarrow{f_0}(Y^{(1)},y_0^{(1)})\xrightarrow{f_0}(Y^{(2)},y_0^{(2)})\rightarrow\ldots\xrightarrow{f_{k-1}}(Y^{(k)},y_0^{(k)})\xrightarrow{f_k}(Z,z_0)$ so that each covering $f_i$ has degree $p_i^{n_i}$ for some prime number $p_i$ and has primitive group of monodromy.
\end{corollary}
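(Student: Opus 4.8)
The plan is to argue by induction on the degree $d$ of the covering $f$, using the two theorems of this section as the two horns of a dichotomy. Either the monodromy group acts primitively, in which case the structure theorem for solvable primitive groups (the second theorem above) settles the matter in a single step, or it acts imprimitively, in which case the decomposition criterion (the first theorem above) peels off a nontrivial factorization to which the inductive hypothesis applies. Throughout, let $G=M_f(\pi_1(Z,z_0))\le S(f^{-1}(z_0))$ denote the monodromy group, which is transitive because $X$ is connected.

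For the base of the induction, suppose $G$ acts primitively on the fiber $f^{-1}(z_0)$. Then the second theorem above identifies the fiber with $F_p^n$ for some prime $p$ and some $n\ge 1$, so that $|f^{-1}(z_0)|=p^n$; thus $f$ itself has primitive monodromy and prime-power degree, and the trivial tower consisting of $f$ alone suffices. (If $d=1$ the covering is a homeomorphism and the empty tower does the job.) If instead $G$ acts imprimitively, the first theorem above produces a factorization $f=h\circ g$ with coverings $g:(X,x_0)\to(Y,y_0)$ and $h:(Y,y_0)\to(Z,z_0)$, where $Y$ is the intermediate cover whose fiber is the set of blocks $\mathcal{B}=\{B_1,\dots,B_m\}$ of a nontrivial block system and $y_0$ is the block $B$ containing $x_0$. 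Both factors have degree strictly between $1$ and $d$.

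To run the induction I must verify two inheritance properties. First, $Y$ is connected, so that primitivity is meaningful at the next stage; this is automatic since $G$ acts transitively on $\mathcal{B}$, and likewise $g$ has transitive monodromy because $\mathrm{Stab}_G(B)$ acts transitively on $B$. Second, both $g$ and $h$ must again have solvable monodromy, and here I would identify their monodromy groups explicitly. The monodromy of $h$ is the image of $G$ in $S(\mathcal{B})$, hence a quotient of $G$; and since $g^{-1}(y_0)$ is precisely the block $B$, the monodromy of $g$ is the image in $S(B)$ of the setwise stabilizer $\mathrm{Stab}_G(B)$ acting on $B$, hence a subquotient of $G$. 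As subgroups and quotients of a solvable group are solvable, both $g$ and $h$ have solvable monodromy. Applying the inductive hypothesis to $g$ and to $h$ then yields towers of primitive, prime-power-degree coverings refining each; concatenating the tower for $g$ below the tower for $h$ produces the required decomposition of $f$.

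The only genuinely delicate point is the identification of the monodromy of the lower covering $g$ as the block stabilizer $\mathrm{Stab}_G(B)$ acting on the single block $B$ (rather than on the full fiber), together with the bookkeeping that $g$ and $h$ stay transitive so that the notion of primitivity persists down the tower. Once this subquotient description is established, solvability is inherited for free and the induction is routine. I emphasize that the prime $p_i$ and exponent $n_i$ need not be constant along the tower, which is exactly why the statement allows each factor $f_i$ its own degree $p_i^{n_i}$.
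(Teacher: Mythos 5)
Your proof is correct, and it is essentially the paper's intended argument: the paper states this corollary without a separate proof, precisely because it is meant to follow from the two theorems by the induction you describe (factor via the imprimitivity criterion, note that the monodromy groups of both factors are subquotients of a solvable group hence solvable, and invoke the Galois structure theorem once primitivity is reached to get prime-power degree). Your identification of the monodromy of the lower covering $g$ as the image of $\mathrm{Stab}_G(B)$ acting on the block $B$ is the right way to make the inheritance of solvability precise, and the rest of the bookkeeping (transitivity, strictly decreasing degrees, concatenation of towers) is handled correctly.
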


We will start by proving the first theorem in the list.

\begin{proof}
For one direction suppose that a covering $(X,x_0)\xrightarrow{f}(Z,z_0)$ can be decomposed as a composition of coverings $(X,x_0)\xrightarrow{g}(Y,y_0)\xrightarrow{h}(Z,z_0)$. Then the fiber $f^{-1}(z_0)$ can be decomposed into blocks which consist of preimages under $g$ of the points in the fiber $h^{-1}(z_0)$. These blocks get permuted among themselves by any loop in $(Z,z_0)$, thus showing that the monodromy group of the covering $f$ is imprimitive.
Conversely, let the monodromy $M_f$ act imprimitively on the fiber $f^{-1}(z_0)$. Let $y_0,...,y_n$ denote a system of blocks that shows that the action is imprimitive (each $y_i$ is a block). We number them so that the block $y_0$ contains the point $x_0$. Then the monodromy action of $\pi_1(Z,z_0)$ on $f^{-1}(z_0)$   gives rise to its action on the set of imprimitivity blocks ${y_0,\ldots,y_n}$. Denote by $(Y,y_0)\xrightarrow{h}(Z,z_0)$ the covering that corresponds to the subgroup of $\pi_1(Z,z_0)$ that stabilizes the block $y_0$. We call the chosen point $y_0$ by the same name as the block $y_0$, which shouldn't cause confusion (one can think of $g^{-1}(z_0)$ as the set of blocks ${y_0,\ldots,y_n}$). It remains to show that the map $f:(X,x_0)\rightarrow (Z,z_0)$ factors through $h:(Y,y_0)\rightarrow (Z,z_0)$. By the theorem of lifting of coverings it is enough to show that $f_*(\pi_1(X,x_0))$ is contained in $g_*(\pi_1(Y,y_0))$. The first of these groups is equal to the subgroup of $\pi_1(Z,z_0)$ that stabilizes $x_0$, while the second is equal (by construction) to the subgroup of $\pi_1(Z,z_0)$ that stabilizes the block $y_0$. From the definition of imprimitivity blocks, if a loop stabilizes $x_0$, then it must stabilize the block which contains it, namely $y_0$. This observation finishes the proof that imprimitivity of monodromy action implies that the covering $f$ decomposes.
\end{proof}

Now we will prove the second theorem, which is contained in the works of Galois.

\begin{proof}
Let $G\geq G^{(1)} \geq G^{(2)} \geq \ldots \geq G^{(k-1)} \geq G^{(k)}=1$ be the derived series for the group $G$ (i.e. $G^{(i+1)}$ is the commutator subgroup of $G^{(i)}$). Each of the subgroups $G^{(i)}$ is normal in the group $G$. Hence the group $G^{(k-1)}$ is an abelian normal subgroup of $G$. Let $p$ be any prime dividing the order of $G^{(k-1)}$ and let $N\leq G^{(k-1)}$ be the subgroup consisting of elements of order $p$ in it and the identity element: $N={n\in G^{(k-1)} : n^p=1}$. It is a subgroup, because $G^{(k-1)}$ is abelian. It is a characteristic subgroup of $G^{(k-1)}$, hence it is normal in $G$. Since the order of every non-trivial element of $N$ is $p$, the abelian group $N$ can be identified with the $F_p$-vector space $F_p^n$ for some natural number $n$.
Since the group $G$ acts primitively on $X$, the action of the normal subgroup $N$ must be transitive (because the orbits under action of any normal subgroup of $G$ form imprimitivity blocks for the action of $G$). Now since $N$ is abelian, the action must be regular as well (because the stabilizers of points in $X$ under the action of a transitive group must all be conjugate to each other, but in an abelian group this means that the stabilizers of all the points are equal). Thus every element in $N$ that fixes some point in $X$ must fix all points in $X$, hence it is the trivial permutation of the set $X$ and thus must be trivial itself.
Since the action of $N$ on $X$ is free and transitive, the points in $X$ can be identified with elements of $N$. Namely, choose $x_0\in X$ be any point. We will identify any other point $x$ with the unique element $n\in N$ that sends $x_0$ to $x$.
Let now $G_{x_0}$ be the stabilizer subgroup of $x_0$ in $G$. We claim that every element $g$ of $G$ can be written uniquely as a product $nh$ with $n\in N$ and $h\in G_{x_0}$. Indeed, let $n$ be the unique element of $N$ that maps $x_0$ to $g\cdot x_0$. Then the element $n^{-1} g$ stabilizes $x_0$, i.e. belongs to $G_{x_0}$. Denote it by $h$. Thus $g=n h$ with $n\in N$ and $h\in G_{x_0}$. If $g=n h=n' h'$ are two representations of $g$ in such form, then $n'^{-1}n=h' h^{-1}$ must be an element of $N$ that fixes $x_0$. But we have proved that it must then be the identity element.
Finally, after identifying the points of $X$ with the elements of the $F_p$-vector space $N$, the elements of $G_{x_0}$ act by linear mappings. Indeed, if we denote by star the action of $G_{x_0}$ on $N$ coming from identification of $X$ with $N$, then $g*n$ should be the (unique) element of $N$ that sends $x_0$ to $gnx_0$. Since the element $gng^{-1}$ belongs to $N$ and sends $x_0$ to $gnx_0$, we are forced to declare that $g*n=gng^{-1}$. Now the fact that $G_{x_0}$ acts linearly on $N$ is evident.
Thus we can identify the set $X$ with the vector space $F_p^n$, the abelian group $N$ with the group of translations of this vector-space and the group $G_{x_0}$ with the group of linear transformation of it.
\end{proof}

\section{Related results} \label{funny-section}

We can apply our classification results for instance to the classification of rational functions of degree 3. Indeed, when the degree of the rational function is 3, only the following branching data is possible: 4 double points, 2 double points and one triple point, or 2 triple points. In the first case the function is of the kind we described as case (2,2,2,2) and thus we get the following classification result: for every four-tuple of points in $\mathbb{CP}^1$ there are 4 left equivalence classes of rational functions of degree 3 with four simple branching points. If we look at the moduli space of left-right equivalence classes of such functions, we get that this space admits a degree 4 branched covering over the moduli space of unordered four-tuples of points in $\mathbb{CP}^1$, i.e. over the space $\mathbb{C}$ --- the coordinate on this space being the $j$-invariant (which is defined in terms of the cross ratio $\tau$ of these points as $\frac{4}{27}\frac{(\tau^2-\tau+1)^3}{\tau^2(\tau-1)^2}$). It is branched over points 0 and 1 --- with a triple branching point over 0 and two double points over 1. It is natural to compactify this branched covering by adding a point $\infty$ in the target, corresponding to the case when two branhing points get merged together, and two points in the source --- one for the case when the rational function stays a rational function in the limit (with two double branching points and one triple) and one for the case when the rational function degenerates to a mapping from a reducible curve with two irreducible components to the sphere, with the first of these being a triple point for the branched covering. This description of course agrees well with the desription of this moduli space as the modular curve $X_0(3)$.

Another small application is a characterization of polynomials, invertible in radicals. The theorems in section \ref{appendix-section} tell that a polynomial of degree 6 is invertible in radicals if and only if it is a composition of polynomials of degrees $p^k$ for some prime numbers $p$ invertible in radicals having primitive monodromy groups. For prime degee we have already shown that these polynomials are (up to composition with linear functions) either the power functions $z\to z^p$, or the Chebyshev polynomial. In his work \cite{Ritt} Ritt has shown that if a polynomial of degree $p^k$ with $k>1$ has primitive solvable monodromy group, then $p=2,k=2$, meaning that the polynomial is of degree $4$. This allows an absolutely explicit characterization of polynomials invertible in radicals --- any such polynomial is a composition of linear functions, power functions $z\to z^p$, Chebyshev polynomials and polynomials of degree $4$.

To see why any such polynomial is in fact invertible in radicals, we should show that power functions, Chebyshev polynomials and degree 4 polynomials are all invertible in radicals. For power functions this is trivial. Chebyshev polynomials can be inverted in radicals explicitly: to solve the equation $$w=T_n(z)$$ where $T_n$ is the degree $n$ Chebyshev polynomial, one can use the following trick. We know that Chebyshev polynomial is defined by the property that $T_n(\frac{u+1/u}{2})=\frac{u^n+u^{-n}}{2}$. We can write $w$ as $\frac{u^n+u^{-n}}{2}$ --- for this we should take $u=\sqrt[n]{w+\sqrt{w^2-1}}$. Then $$z=\frac{u+1/u}{2}=\frac12\left(\sqrt[n]{w+\sqrt{w^2-1}}+\sqrt[n]{w-\sqrt{w^2-1}}\right)$$

Finally there are many ways to solve degree 4 polynomial equations in radicals, and hence also to invert a degree 4 polynomial in radicals (see for instance the discussion in \cite{Khovanskii} and references therein).

\end{document}